\newtheorem{theorem}{Theorem}%[\arabic{section}]
\newtheorem{corollary}[theorem]{Corollary}
\newtheorem{example}[theorem]{Example}
\newtheorem{remark}[theorem]{Remark}
\newcommand{\mendth}{\hfill \ensuremath{\vartriangle}}
\DeclareMathOperator*{\col}{col}
\DeclareMathOperator{\He}{Sym}
\DeclareMathOperator*{\diag}{diag}
\DeclareMathOperator{\eps}{\varepsilon}
\DeclareMathOperator{\transp}{\intercal}
\newcommand{\gint}[2]{\ensuremath{#1,\ldots,#2}}
\title{Affine characterizations of minimum and mode-dependent dwell-times for uncertain linear switched systems}% and piecewise constant LPV systems}
\author{Corentin Briat$^\dag$ and Alexandre Seuret$^\ddag$
\thanks{$^\dag$Corentin Briat is with the Department of Biosystems Science and Engineering (D-BSSE), Swiss Federal Institute of Technology--Z\"{u}rich (ETH-Z), Mattenstrasse 26, 4058 Basel, Switzerland; email: {\tt  corentin@briat.info, briatc@bsse.ethz.ch}; url:~{\tt http://www.briat.info}}
\thanks{$^\ddag$Alexandre Seuret is with the Department of
Automatic Control, Gipsa-lab, 961 rue de la Houille Blanche, BP 46,
38402 Grenoble Cedex, France,
email: {\tt alexandre.seuret@gipsa-lab.grenoble-inp.fr}}
\thanks{The work of A. Seuret is supported by the FeedNetBack project, FP7-
ICT-2007-2: http://www.feednetback.eu/.}}
\begin{document}
\maketitle

\begin{abstract}
An alternative approach for minimum and mode-dependent dwell-time characterization for switched systems is derived. The proposed technique is related to Lyapunov looped-functionals, a new type of functionals leading to stability conditions affine in the system matrices, unlike standard results for minimum dwell-time. These conditions are expressed as infinite-dimensional LMIs which can be solved using recent polynomial optimization techniques such as sum-of-squares. The specific structure of the conditions is finally utilized in order to derive dwell-time stability results for uncertain switched systems. Several examples illustrate the efficiency of the approach.
\end{abstract}

\begin{keywords}
Switched systems; dwell-time; robustness; sum of squares
\end{keywords}

\section{Introduction}

Switched systems \cite{Hespanha:99,Hespanha:01c,Daafouz:02,Liberzon:03,Geromel:06b,Zhao:08,Cai:08,Lin:09,Goebel:09} are an important subclass of hybrid systems for which the system dynamics are selected among a countable family of subsystems. They are very powerful modeling tools for several real world processes, like congestion modeling and control in networks \cite{Hespanha:01b,Shorten:06,Briat:10,Briat:11k}, switching control laws \cite{Liberzon:03}, electromechanical systems \cite{Langjord:08}, networked control systems \cite{Donkers:09}, electrical devices/circuits \cite{Almer:07,Beccuti:07}, etc. These systems exhibit interesting behaviors motivating their analysis: for instance, switching between asymptotically stable subsystems does not always result in an overall stable system \cite{Decarlo:00,Liberzon:99}. Conversely, switching between unstable subsystems may result in asymptotically stable trajectories \cite{Decarlo:00,Liberzon:99}. It is well-known that in the case of asymptotically stable linear subsystems, when their matrices commute or can be expressed in an upper triangular form via a common similarity transformation, stability of the overall switched system under arbitrary switching is actually equivalent to the existence of a quadratic common Lyapunov function \cite{Liberzon:99,Liberzon:03}. In all the other cases, the existence of a quadratic common Lyapunov function is only sufficient, albeit the existence of a common (not necessarily quadratic) Lyapunov function is necessary. When no common Lyapunov function exists, approaches based for instance on  polyhedral Lyapunov functions \cite{Molchanov:89,Blanchini:95} or switched Lyapunov functions \cite{Daafouz:02}, may be considered instead. The main difficulty arising from the use of switched Lyapunov functions lies in the discontinuities of the Lyapunov function level at switching instants. If switchings occur too often, stability may indeed be lost. When considering switchings among a family of asymptotically stable subsystems, an important notion is the notion of minimum dwell-time, which is the minimal time between two successive switchings \cite{Morse:96} such that any switching rule satisfying this minimal-dwell time condition makes the overall system asymptotically stable. This notion has been later relaxed in \cite{Hespanha:99} via the introduction of the average dwell-time. Several recent results have characterized dwell-times as semidefinite programming problems (LMIs) by using quadratic and homogeneous Lyapunov functions  \cite{Geromel:06b,Chesi:10}.

In these latter results, the minimum dwell-time computation can be performed through a mix of continuous-time and discrete-time stability conditions, reflecting then the hybrid structure of the system. The continuous-time condition accounts for the asymptotic stability of the subsystems while the discrete-time condition ensures that the Lyapunov function has sufficiently decreased between switching instants so that a positive jump of the Lyapunov function level can be tolerated. The results reported in \cite{Geromel:06b,Chesi:10} have led to dramatic improvements in terms of accuracy compared to initial minimal dwell-time \cite{Morse:96} and average dwell-time \cite{Hespanha:99,Zhao:12} results. This efficiency emphasizes the importance of considering discrete-time and mixed stability criteria for analyzing switched systems, and motivates their extension to uncertain systems. The main drawback in this type of criteria is the presence of exponential terms which make the extension to uncertain systems a difficult task. There is indeed, at this time, no efficient way for dealing with matrix uncertainties at the exponential. A second drawback is their limited application to switched linear systems.

The notions of stability under minimum \cite{Morse:96,Geromel:06b,Chesi:10} and mode-dependent \cite{Zhao:12} dwell-times are considered in this paper. Concerning minimum dwell-time, the ideas of \cite{Geromel:06b} are continued while results on mode-dependent dwell-times are directly inspired from results on discrete-time switched systems \cite{Mignone:00,Daafouz:02}. Conditions for stability with minimal and mode-dependent dwell-times are derived using a new technique initially developed for sampled-data systems \cite{Seuret:12, Seuret:11} and later extended to impulsive systems \cite{Briat:11l,Briat:13a}. It has indeed been proved in \cite{Seuret:12} that discrete-time stability is equivalent to a very particular type of continuous-time stability, showing thus that any discrete-time stability criterion always has a continuous-time interpretation, in terms of the use of specific functionals referred to as \emph{looped-functionals}. The main interest of the alternative continuous-time formulation lies in its affine dependence on the system matrices that can be exploited to derive results for uncertain systems quite easily. Based on this approach, conditions for minimal and mode-dependent dwell-times characterization of uncertain switched systems are obtained. The derived criteria are expressed as infinite-dimensional convex feasibility problems that are solved using polynomial techniques \cite{Parrilo:00, Chesi:09}. As a byproduct, the approach is also valid in the case of linear switched systems with uncertain time-varying parameters.

\textit{Outline:} The structure of the paper is as follows: in Section \ref{sec:prel} preliminary definitions and results are recalled. In Sections \ref{sec:aff} and \ref{sec:modedep}, affine conditions for the characterization of minimum dwell-time and mode-dependent dwell-time are stated. The results are then finally extended to the uncertain case in Section \ref{sec:unc}. %Finally, a similar result for LPV systems with piecewise constant parameters is discussed in Section \ref{sec:lpv}.
Examples are considered in the related sections. %Examples are considered in the related sections. %The toolbox SOSTOOLS \cite{sostools} and the solver SeDuMi \cite{Sturm:01a} are used to enforce the conditions stated in the main results.

\textit{Notations:} The sets of symmetric and positive definite matrices of dimension $n$ are denoted by $\mathbb{S}^n$ and $\mathbb{S}_{+}^n$ respectively. Given two symmetric real matrices $A,B$, $A\succ(\succeq) B$ means that $A-B$ is positive (semi)definite. The set of positive real numbers is $\mathbb{R}_+$. For a square real matrix $A$, the operator $\He(A)$ stands for the sum $A+A^T$.  The identity matrix and zero-matrix of size $n$ are denoted by $I_n$ and $0_n$, respectively. For some square matrix $M\in\mathbb{R}^{n\times n}$, we denote by $\mathcal{D}_n(M)$ the block-diagonal matrix $\diag(M,0_n,0_n)$.

%We define the set of consecutive integers $\{N_1,\ldots,N_2\}$ by $\gint{N_1}{N_2}$.

\section{Preliminaries}\label{sec:prel}

\subsection{Definition of the system}

Linear switched systems of the form
\begin{equation}\label{eq:mainsyst}
\begin{array}{rcl}
  \dot{x}(t)&=&A_{\sigma(t)}x(t)\\
  x(t_0)&=&x_0
\end{array}
\end{equation}
are considered in this paper. Above, $x,x_0\in\mathbb{R}^n$ are the state of the system and the initial condition, respectively. In Sections \ref{sec:aff} and \ref{sec:modedep}, the matrices $A_i$ are assumed to be exactly known, while in Section \ref{sec:unc} uncertain convex sets $\mathcal{A}_i$ of matrices are considered. The switching signal $\sigma$ is defined as a piecewise constant function $\sigma:\mathbb{R}_+\to\{1,\ldots,N\}$. We also assume that the strictly increasing sequence of switching instants $\{t_1,t_2,\ldots\}$ is state-independent, that is the mode changes are viewed as external events. The sequence is also assumed to admit no accumulation point, i.e. $t_k\to\infty$ as $k\to\infty$. In what follows, we shall consider the following family of switching rules
\begin{equation}\label{eq:it}
  \mathbb{I}_{\bar{T}}:=\left\{\{t_1,t_2,\ldots\}:\ t_{k+1}-t_{k}\in[\bar{T},+\infty),\ k\in\mathbb{N}\right\}
\end{equation}
that satisfy a minimum dwell-time condition.

%Similarly, we may define the class of LPV systems with piecewise constant parameters as
%\begin{equation}\label{eq:mainsystLPV}
%  \begin{array}{rcl}
%    \dot{x}(t)&=&A(\rho(t))x(t)\\
%    x(t_0)&=&x_0
%  \end{array}
%\end{equation}
%where $x,x_0\in\mathbb{R}^n$ are the system state and the initial condition respectively. The parameters $\rho\in\mathcal{P}$ are assumed to be piecewise constant, that is, the set of parameter trajectories $\mathcal{P}$ is defined as
%\begin{equation}\label{eq:P}
%  \mathcal{P}:=\left\{\rho:\mathbb{R}_+\to U_\rho:\ \rho(t)=\eta_i\in U_\rho,\ t\in[t_i,t_{i+1}),\ \{t_i\}_{i\in\mathbb{N}}\in\mathbb{I}_{\bar{T}}\right\}
%\end{equation}
%where $U_\rho:=\times_{i=1}^N[-\bar{\rho}_i,\bar{\rho}_i]\in\mathbb{R}^N$.

%Since over each interval $[t_i,t_{i+1})$, the system is LTI then it is easy to compute the state-transition matrix
%\begin{equation}\label{eq:DTsyst}
%  x(t_i+\tau)=\Phi(\tau,\eta_i)x(t_i),\ \tau\in[0,T_i]
%\end{equation}
%where $\Phi(\tau,\eta_i)=e^{A(\eta_i)\tau}$.

\subsection{Minimum dwell-time results}

An upper-bound on the minimum dwell-time can be determined using the following result:
\begin{theorem}[\cite{Geromel:06b}]\label{th:geromel}
Assume that for some $\bar{T}>0$, there exist matrices $P_i\in\mathbb{S}^n_{+}$, $i=\gint{1}{N}$ such that the LMIs
\begin{equation}\label{eq:ctlmis}
  A_i^{\transp}P_i+P_iA_i\prec0,\ i=\gint{1}{N}
\end{equation}
and
\begin{equation}\label{eq:exp}
  e^{A_i^{\transp}\bar{T}}P_je^{A_i\bar{T}}-P_i\prec0,\ i,j=\gint{1}{N},\ i\ne j
\end{equation}
hold. Then, for any sequence of switching instants in $\mathbb{I}_{\bar{T}}$, the equilibrium solution $x=0$ of system (\ref{eq:mainsyst}) is globally asymptotically stable.\mendth
\end{theorem}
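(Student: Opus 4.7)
The plan is to introduce the multiple Lyapunov function $V(x,i) := x^{\transp} P_i x$ and show that along any trajectory generated by a switching sequence in $\mathbb{I}_{\bar{T}}$ the indexed function $V(x(t),\sigma(t))$ decreases strictly between consecutive switches \emph{and} drops strictly across each switch, by an amount uniform in the (infinitely many) admissible dwell-times. Exponential decay of $V$, together with the fact that $P_i \succ 0$ for each $i$, then yields global asymptotic stability of the origin.

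First, I would use the continuous-time LMI (\ref{eq:ctlmis}) to ensure decrease between switches: on any interval $[t_k,t_{k+1})$ on which $\sigma \equiv i$, one has $\frac{d}{dt}[x(t)^{\transp} P_i x(t)] = x(t)^{\transp}(A_i^{\transp} P_i + P_i A_i) x(t) < 0$. As an auxiliary consequence, applied to the matrix-valued function $Q_i(s) := e^{A_i^{\transp} s} P_i e^{A_i s}$, the same LMI gives $\dot Q_i(s) \prec 0$ for all $s\ge 0$, hence the monotonicity relation
\begin{equation*}
  e^{A_i^{\transp} s} P_i e^{A_i s} \preceq P_i,\quad s\ge 0.
\end{equation*}
This monotonicity is the key tool to extend the hypothesis (\ref{eq:exp}), which is stated only at $s=\bar{T}$, to all admissible interswitch lengths $\tau_k := t_{k+1}-t_k \ge \bar{T}$.

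Next, at a switching instant $t_{k+1}$ with modes $i := \sigma(t_k)$ and $j := \sigma(t_{k+1})$, I would write $x(t_{k+1}) = e^{A_i \tau_k} x(t_k)$ and factor
\begin{equation*}
  e^{A_i^{\transp} \tau_k} P_j e^{A_i \tau_k} = e^{A_i^{\transp}(\tau_k-\bar{T})}\left(e^{A_i^{\transp}\bar{T}} P_j e^{A_i \bar{T}}\right) e^{A_i(\tau_k-\bar{T})}.
\end{equation*}
Since the modes form a finite set and (\ref{eq:exp}) is strict, there exists $\eta \in (0,1)$ with $e^{A_i^{\transp}\bar{T}} P_j e^{A_i \bar{T}} \preceq (1-\eta) P_i$ for all $i\ne j$. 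Combining this with the monotonicity bound applied with $s = \tau_k-\bar{T}\ge 0$ yields
\begin{equation*}
  e^{A_i^{\transp}\tau_k} P_j e^{A_i\tau_k} \preceq (1-\eta) P_i,
\end{equation*}
and hence $V(x(t_{k+1}),j) \le (1-\eta)\,V(x(t_k),i)$. Together with the continuous decrease within each mode interval, this produces $V(x(t_k),\sigma(t_k)) \le (1-\eta)^{k} V(x_0,\sigma(t_0))$, so $x(t_k)\to 0$; the uniform upper bound of $V$ on each interval then extends the conclusion to all $t\ge t_0$.

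I expect the only subtle step to be the extension from the fixed-time inequality (\ref{eq:exp}) to all $\tau_k \ge \bar{T}$; everything else is standard multiple-Lyapunov-function bookkeeping. The factorization above, powered by the monotonicity of $Q_i(\cdot)$ obtained from (\ref{eq:ctlmis}), resolves that step cleanly and is the crux of the argument.
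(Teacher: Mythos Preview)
Your argument is correct and aligns with the paper's treatment. The paper does not give a formal proof of this theorem (it is cited from \cite{Geromel:06b}) but only an interpretive paragraph: the switched Lyapunov function $V(x,\sigma)=x^{\transp}P_{\sigma}x$ decreases between switches by (\ref{eq:ctlmis}), and the discrete sequence $V_{\sigma(t_k)}(x(t_k))$ is monotonically decreasing across switches thanks to (\ref{eq:exp}). Your proof makes exactly this precise, and your factorization/monotonicity step --- using $\dot Q_i(s)\prec0$ from (\ref{eq:ctlmis}) to extend (\ref{eq:exp}) from $\bar{T}$ to all $\tau_k\ge\bar{T}$ --- is the standard device in \cite{Geromel:06b} for passing from the fixed dwell-time inequality to the full family $\mathbb{I}_{\bar{T}}$; the paper implicitly relies on this when it says positive jumps are tolerated provided the Lyapunov function has ``sufficiently decreased during the latest continuous-time regime.''
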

This result is easy to interpret. To this aim, let us consider the Lyapunov function $V(x(t))=x(t)P_{\sigma(t)}x(t)$, where $P_i\in\mathbb{S}_+^n$, $i=1,\ldots,N$. In such a case, the continuous-time LMIs (\ref{eq:ctlmis}) ensure that all the subsystems are asymptotically stable and that the Lyapunov function decreases between switching instants. The discrete-time conditions (\ref{eq:exp}) capture the jumps of the Lyapunov function at switching instants. Positive jumps are tolerated provided that the Lyapunov function has sufficiently decreased during the latest continuous-time regime, or equivalently, provided that $\bar{T}>0$ is sufficiently large. Note that conditions (\ref{eq:exp}) characterize first the continuous-time evolution in mode $i$, and then the discrete transition $i\to j$. It is also possible to consider the other way around, that is the transition $j\to i$ first, and then the continuous-time evolution in mode $i$. In such a case, the following dual discrete-time condition should be considered
\begin{equation}\label{eq:exp2}
  e^{A_i^{\transp}\bar{T}}P_ie^{A_i\bar{T}}-P_j\prec0,\ i,j=\gint{1}{N},\ i\ne j
\end{equation}
instead of condition (\ref{eq:exp}) in Theorem \ref{th:geromel}. The two criteria are depicted in Fig. \ref{fig:LF} where we can see that despite being non-monotonic, the switched-Lyapunov function is able to capture information on the asymptotic stability of the system.

\begin{figure}[h]
  \centering
  \includegraphics[width=0.5\textwidth]{./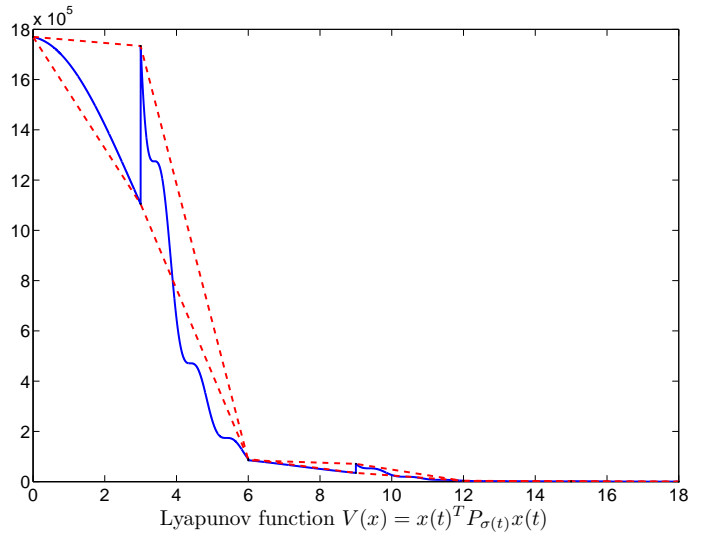}
  \caption{Evolution of a switched Lyapunov function (plain) and the two monotonically decreasing discrete-time criteria (dashed).}\label{fig:LF}
\end{figure}

It has been pointed out in \cite{Geromel:06b} that Theorem \ref{th:geromel} is more accurate than the initial results on minimum dwell-time \cite{Morse:96} and average dwell-time \cite{Hespanha:99,Zhao:12}. This fact motivates the extension of Theorem \ref{th:geromel} to the uncertain system case in Section \ref{sec:unc}.

%In the context of LPV systems with piecewise constant parameters, we have the following similar result:
%\begin{theorem}\label{th:lpv1}
%Assume that for some $\bar{T}>0$, there exists a matrix function $P:U_\rho\to\mathbb{S}^n_{+}$ such that the LMIs
%\begin{equation}
%  A(\eta)^{\transp}P(\eta)+P(\eta)A(\eta)\prec0,\ \eta\in U_\rho
%\end{equation}
%and
%\begin{equation}\label{eq:exp2}
%  e^{A(\eta)^{\transp}\bar{T}}P(\eta)e^{A(\eta)\bar{T}}-P(\theta)\prec0,\ \eta,\theta\in U_\rho
%\end{equation}
%hold. Then, for any switching sequence in $\mathbb{I}_{\bar{T}}$, the equilibrium solution $x=0$ of system (\ref{eq:mainsystLPV}) is globally asymptotically stable.\mendth
%\end{theorem}
%\begin{proof}  The proof is similar to the one of Theorem \ref{th:geromel}; see \cite{Geromel:06b}.
%\end{proof}

\subsection{Mode-dependent dwell-time results}

Assume now that the system remains in mode $i$ during a period $T_i$ included in the range $T_i\in[T^{min}_i,T^{max}_i]$, $\epsilon<T^{min}_i<T^{max}_i<\infty$, $\epsilon>0$. We then have the following result:
\begin{theorem}\label{th:second}
Assume there exist matrices $P_i\in\mathbb{S}^n_{+}$, $i=\gint{1}{N}$, such that the LMIs
\begin{equation}\label{eq:djsqod}
  e^{A_i^{\transp}T_i}P_ie^{A_iT_i}-P_j\prec0,\ i,j=\gint{1}{N},\ i\ne j
\end{equation}
hold for all $T_i\in[T^{min}_i,T^{max}_i]$ . Then, the system with mode-dependent dwell-time $T_i\in[T^{min}_i,T^{max}_i]$, $i=1,\ldots,N$ is globally asymptotically stable.
\end{theorem}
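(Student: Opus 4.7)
The plan is to use the switched quadratic Lyapunov function $V(x,t) = x^\transp P_{\sigma(t)} x$ and track its values \emph{just before} each switching instant, producing the monotonically decreasing ``dashed'' curve of Fig.~\ref{fig:LF}. Let $\{t_k\}_{k \geq 1}$ denote the switching sequence, $\sigma_k := \sigma(t)$ for $t \in [t_k, t_{k+1})$, and $\tau_k := t_{k+1} - t_k \in [T_{\sigma_k}^{min}, T_{\sigma_k}^{max}]$. Setting $W_k := x(t_k)^\transp P_{\sigma_{k-1}} x(t_k)$, the identity $x(t_{k+1}) = e^{A_{\sigma_k} \tau_k} x(t_k)$ combined with the LMI~(\ref{eq:djsqod}) specialized to $i = \sigma_k$, $j = \sigma_{k-1}$, $T_i = \tau_k$ yields $W_{k+1} < W_k$ for every $k \geq 1$ directly.

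To turn this pointwise strict decrease into an exponential contraction, I would next argue that the continuous matrix-valued map $T \mapsto P_j - e^{A_i^\transp T} P_i e^{A_i T}$ is strictly positive definite on the compact interval $[T_i^{min}, T_i^{max}]$ for each of the finitely many ordered pairs $(i,j)$ with $i \neq j$; hence its smallest eigenvalue admits a uniform lower bound $\delta > 0$. Together with a uniform upper bound $\Lambda := \max_j \lambda_{\max}(P_j) < \infty$, this produces $W_{k+1} \leq \rho W_k$ for some $\rho \in (0,1)$ independent of $k$. Consequently $W_k \to 0$ geometrically, and since $\mu := \min_j \lambda_{\min}(P_j) > 0$, one gets $\|x(t_k)\|^2 \leq W_k/\mu \to 0$.

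Finally, to upgrade convergence of the subsequence $x(t_k)$ to convergence of the full trajectory $x(t)$, I would bound the continuous-time evolution between switches: for $t \in [t_k, t_{k+1})$, $\|x(t)\| \leq C \|x(t_k)\|$ with $C := \max_i \sup_{s \in [0, T_i^{max}]} \|e^{A_i s}\|$, which is finite because $T_i^{max} < \infty$. The main subtlety I anticipate is precisely this last observation combined with the uniform contraction argument: the finite upper bound $T_i^{max}$ is critical, because in contrast to Theorem~\ref{th:geromel} no continuous-time condition such as $A_i^\transp P_i + P_i A_i \prec 0$ is imposed here, so the individual subsystems may well be unstable and it is only the boundedness of the dwell-times that prevents unbounded excursions between switches while the Lyapunov values collapse geometrically across the switching grid.
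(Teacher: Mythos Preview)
Your proof is correct and follows essentially the same idea as the paper, which merely points to the discrete-time switched-system Lyapunov condition of \cite{Mignone:00,Daafouz:02} and the $P_i/P_j$ permutation discussed after Theorem~\ref{th:geromel}; you have written out explicitly what that one-line reference entails. In particular, your observation that the finiteness of $T_i^{\max}$ is what replaces the continuous-time condition $A_i^{\transp}P_i+P_iA_i\prec0$ and controls the inter-switch excursions is exactly the point the paper alludes to when contrasting Theorem~\ref{th:second} with Theorem~\ref{th:geromel} and in Remark~\ref{rem:1}.
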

\begin{proof}
  The proof is a simple adaptation of the discrete-time stability condition for switched discrete-time systems \cite{Mignone:00,Daafouz:02}. Note, however, the permutation of $P_i$ and $P_j$ compared to the usual result. This can be performed according to the discussion on Theorem \ref{th:geromel} in the previous section.
\end{proof}

In the above result, the dwell-times lie within a certain range of values. It is however possible to have the upper-bound $T_i^{max}=\infty$ for certain modes by slightly modifying the conditions. This is explained in the remark below.
\begin{remark}\label{rem:1}
When $T_i^{max}=\infty$ for some indices $i$, it is then necessary that subsystem $i$ be stable, in a similar way as in minimum dwell-time results. In this case, inequality (\ref{eq:djsqod}) considered at $T_i=\infty$ can be substituted by $A_i^{\transp}P_i+A_iP_i\prec0$. The stability conditions for subsystem $i$ then become
\begin{equation}\label{eq:rem1_1}
  e^{A_i^{\transp}T_i^{min}}P_ie^{A_iT_i^{min}}-P_j\prec0,\ j=\gint{1}{N},\ j\ne i
\end{equation}
and
\begin{equation}\label{eq:rem1_2}
A_i^{\transp}P_i+A_iP_i\prec0.
\end{equation}
A proof for this statement is similar to the one of Theorem \ref{th:geromel} and is thus omitted.
\end{remark}

It is important to mention that the provided approach for mode-dependent dwell-time is radically different from the standard approaches, such as the one in \cite{Zhao:12}. In the current approach, a discrete-time condition is used to explicitly characterize a range of values for the dwell-times, while in most of the approaches continuous-time conditions are considered. Additionally, the proposed approach does not require stability of all the subsystems, and is thus applicable to a wider class of switched systems. The price to pay is the difficulty for extending the results to uncertain systems, a problem which is resolved by the proposed approach relying on looped-functionals.

\section{Affine characterization of minimum dwell-times}\label{sec:aff}

The main drawback of Theorem \ref{th:geromel} lies in the presence of exponentials in the conditions (\ref{eq:exp})  which make the extension to uncertain systems very difficult since there is no efficient way for considering matrix uncertainties at the exponential. To overcome this difficulty, an alternative condition for Theorem \ref{th:geromel} that is affine in the system matrices is provided in this section. It is also emphasized that this novel condition can be interpreted as the non-increase condition of a certain class of looped-functional \cite{Seuret:12,Seuret:11,Briat:11l,Briat:13a} recently introduced by the authors.

\subsection{Main results}
The following result provides sufficient conditions for Theorem \ref{th:geromel}.
\begin{theorem}\label{th:equiv}
Assume there exist a scalar $\eps>0$, matrices $P_i\in\mathbb{S}^n_{+}$, $i=\gint{1}{N}$ and symmetric differentiable matrix functions $Z_{ij}:[0,\bar{T}]\to\mathbb{S}^{3n}$, $i,j=\gint{1}{N}$, $i\ne j$, verifying
\begin{equation}\label{eq:loliloolldr}
Y_2^{\transp}Z_{ij}(\bar{T})Y_2-Y_1^{\transp}Z_{ij}(0)Y_1=0
\end{equation}
where
\begin{equation}
\begin{array}{lclclcl}
  Y_1&=&\begin{bmatrix}
    I_n & 0_n\\
    I_n & 0_n\\
    0_n & I_n
  \end{bmatrix},&\quad& Y_2&=&\begin{bmatrix}
    0_n & I_n\\
    I_n & 0_n\\
    0_n & I_n
  \end{bmatrix}
\end{array}
\end{equation}
and such that the LMIs
\begin{equation}
  A_i^{\transp}P_i+P_iA_i\prec0,\ i=\gint{1}{N}
\end{equation}
\begin{equation}\label{eq:expless}
 \Psi_{ij}(\bar{T})+\He\left(Z_{ij}(\tau)\mathcal{D}_n(A_i)\right)+\dot{Z}_{ij}(\tau)\preceq0,\ i,j=\gint{1}{N},\ i\ne j
\end{equation}
hold for all $\tau\in[0,T]$ where
\begin{equation}
  \Psi_{ij}(\bar{T}):=\begin{bmatrix}
   \bar{T}(A_i^{\transp}P_i+P_iA_i) & 0_n & 0_n\\
   0_n & P_i-P_j+\eps I_n & 0_n\\
   \star & \star & 0_n
 \end{bmatrix}.
\end{equation}
Then, the switched system (\ref{eq:mainsyst}) is asymptotically stable for any sequence of switching instants in $\mathbb{I}_{\bar{T}}$ and the conditions of Theorem \ref{th:geromel} are satisfied with the same matrices $P_i$.\mendth
\end{theorem}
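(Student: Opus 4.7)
The plan is to view the LMI \eqref{eq:expless} as a dissipation-type inequality along an augmented trajectory and then integrate, using \eqref{eq:loliloolldr} to kill the $Z_{ij}$ contribution. Concretely, fix a mode $i$ and an arbitrary $x_0\in\mathbb{R}^n$, let $x(\tau):=e^{A_i\tau}x_0$ on $[0,\bar{T}]$, and set
\begin{equation*}
\zeta(\tau):=\begin{bmatrix}x(\tau)^{\transp} & x_0^{\transp} & x(\bar{T})^{\transp}\end{bmatrix}^{\transp}\in\mathbb{R}^{3n}.
\end{equation*}
A direct computation yields $\dot{\zeta}(\tau)=\mathcal{D}_n(A_i)\zeta(\tau)$, together with $\zeta(0)=Y_1v$ and $\zeta(\bar{T})=Y_2v$ for $v:=\begin{bmatrix}x_0^{\transp} & x(\bar{T})^{\transp}\end{bmatrix}^{\transp}$, which is exactly what dictates the block forms of $Y_1$ and $Y_2$ in the statement.

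The key observation is that \eqref{eq:expless} has been engineered so that the $Z_{ij}$-dependent terms collapse into an exact total derivative once contracted with $\zeta$. Pre- and post-multiplying \eqref{eq:expless} by $\zeta(\tau)^{\transp}$ and $\zeta(\tau)$ and using $\dot{\zeta}=\mathcal{D}_n(A_i)\zeta$ yields
\begin{equation*}
\zeta(\tau)^{\transp}\Psi_{ij}(\bar{T})\zeta(\tau) + \tfrac{d}{d\tau}\!\left[\zeta(\tau)^{\transp}Z_{ij}(\tau)\zeta(\tau)\right] \le 0.
\end{equation*}
I would integrate this over $[0,\bar{T}]$: the total-derivative term becomes $v^{\transp}\bigl(Y_2^{\transp}Z_{ij}(\bar{T})Y_2-Y_1^{\transp}Z_{ij}(0)Y_1\bigr)v$, which is identically zero by \eqref{eq:loliloolldr}, for every $x_0$ and every switching pair $(i,j)$.

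What remains is to evaluate $\int_0^{\bar{T}}\zeta(\tau)^{\transp}\Psi_{ij}(\bar{T})\zeta(\tau)\,d\tau$. The block-diagonal structure of $\Psi_{ij}(\bar{T})$, combined with the identity $\tfrac{d}{d\tau}[x(\tau)^{\transp}P_ix(\tau)]=x(\tau)^{\transp}(A_i^{\transp}P_i+P_iA_i)x(\tau)$, telescopes the integral into
\begin{equation*}
\bar{T}\bigl[\,x(\bar{T})^{\transp}P_ix(\bar{T})-x_0^{\transp}P_jx_0+\eps\, x_0^{\transp}x_0\,\bigr]\le 0,
\end{equation*}
in which the $P_i$ cross terms cancel. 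Dividing by $\bar{T}>0$, substituting $x(\bar{T})=e^{A_i\bar{T}}x_0$, and invoking the arbitrariness of $x_0$ produces the (dual) discrete-time inequality $e^{A_i^{\transp}\bar{T}}P_ie^{A_i\bar{T}}-P_j\prec 0$, i.e.\ \eqref{eq:exp2}. Together with the continuous-time LMI $A_i^{\transp}P_i+P_iA_i\prec 0$ assumed in the hypothesis, this activates Theorem~\ref{th:geromel} (in the dual form discussed immediately after that theorem) with the same matrices $P_i$, and asymptotic stability for every switching sequence in $\mathbb{I}_{\bar{T}}$ follows.

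The only step requiring genuine care, although it is conceptual rather than technical, is the identification of the correct augmented state and the recognition that \eqref{eq:loliloolldr} must hold \emph{as a matrix identity} on $\mathbb{R}^{2n}$ so that the total-derivative term vanishes along every admissible trajectory, independently of $x_0$ and of the chosen pair $(i,j)$. Once this telescoping mechanism is in place, the rest is a one-line integration.
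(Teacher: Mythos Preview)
Your proof is correct and follows essentially the same route as the paper: contract \eqref{eq:expless} against the augmented vector $\zeta(\tau)=\col(x(\tau),x(0),x(\bar T))$, recognize the $Z_{ij}$ terms as a total derivative, integrate over $[0,\bar T]$, use \eqref{eq:loliloolldr} to kill the boundary contribution, and read off the dual discrete-time condition \eqref{eq:exp2}. The only cosmetic difference is notation; the mechanism and the conclusion are identical.
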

\begin{proof}  To see the implication, first pre- and post-multiply (\ref{eq:expless}) by $$\xi(\tau):=\col(x(\tau),x(0),x(\bar{T})),\tau\in[0,\bar{T}]$$ to obtain
\begin{equation}\label{eq:xit}
\xi(\tau)^{\transp}\Psi_{ij}(\bar{T})\xi(\tau)+\dfrac{d}{d\tau}\left[\xi(\tau)^{\transp}Z_{ij}(\tau)\xi(\tau)\right]\le0.
\end{equation}
Integrating the above inequality from $0$ to $\bar{T}$, we get
\begin{equation}\label{eq:eta}
\begin{array}{lcl}
  \eta_{ij}&:=&\int_0^{\bar{T}}\left[x(0)^{\transp}(P_i-P_j+\eps I_n)x(0)\right.\\
  &&+\left.\bar{T}\dfrac{d}{d\tau}V_i(x(\tau))\right]d\tau\\
  &&+\xi(\bar{T})^{\transp}Z_{ij}(\bar{T})\xi(\bar{T})-\xi(0)^{\transp}Z_{ij}(0)\xi(0)\le0
\end{array}
\end{equation}
where $V_i(x)=x^{\transp}P_ix$. Noting that
\begin{equation}\label{eq:toyloilol}
  \xi(0)=Y_1\begin{bmatrix}
  x(0)\\
  x(\bar{T})
\end{bmatrix},\quad \xi(\bar{T})=Y_2\begin{bmatrix}
  x(0)\\
  x(\bar{T})
\end{bmatrix}
\end{equation}
the last row of (\ref{eq:eta}) can be rewritten as
\begin{equation}
  \begin{bmatrix}
    x(\bar{T})\\
    x(0)
  \end{bmatrix}^{\transp}\left(Y_2^{\transp}Z_{ij}(\bar{T})Y_2-Y_1^{\transp}Z_{ij}(0)Y_1\right)\begin{bmatrix}
    x(\bar{T})\\
    x(0)
  \end{bmatrix}
\end{equation}
which is equal to 0 by virtue of the constraint (\ref{eq:loliloolldr}). Hence, we have
\begin{equation}
  \begin{array}{lcl}
  \eta_{ij}&=&\bar{T}\left[V_i(x(0))-V_j(x(0))+\eps||x(0)||^2_2\right.\\
  &&\left.+V_i(x(\bar{T}))-V_i(x(0))\right]\\
  &=&\bar{T}\left[V_i(x(\bar{T}))-V_j(x(0))+\eps||x(0)||^2_2\right].
  \end{array}
\end{equation}
Finally, noting that $$V_i(x(\bar{T}))-V_j(x(0))=x(0)^{\transp}\left[e^{A_i^{\transp}\bar{T}}P_ie^{A_i\bar{T}}-P_j\right]x(0),$$ then we obtain
$$x(0)^{\transp}\left[e^{A_i^{\transp}\bar{T}}P_ie^{A_i\bar{T}}-P_j\right]x(0)\le-\eps||x(0)||_2^2$$ for all $x(0)\in\mathbb{R}^n$ since (\ref{eq:xit}) is nonpositive. This then implies that (\ref{eq:exp2}) holds and shows that the feasibility of the conditions of Theorem \ref{th:equiv} implies the feasibility of those of Theorem \ref{th:geromel}. The proof is complete.

% inequality holds since  . Hence , inferring in turn the stability of the system.
%
%To show that $1) \Rightarrow 2)$, first note that
%\begin{equation}
%  \begin{array}{lcl}
%    \Delta_{ij}^k&:=&T(V_i(x(t_{k+1}))-V_j(x(t_k)))<0\\
%    &=&T\left[V_i(x(t_{k+1}))-V_i(x(t_k))+\Lambda_{ij}(t_k)\right]\\
%          &=&\int_0^{T}\left(T\dot{V}_i(x(t_k+\tau))+\Lambda_{ij}(t_k)\right)d\tau
%  \end{array}
%\end{equation}
%where $\Lambda_{ij}(t_k):=V_i(x(t_k))-V_j(x(t_k))$. We use now a result from \cite{Peet:08} on positive forms stating that $\Delta_{ij}^k<0$ implies the existence of a differentiable function $W_{ij}^k:[0,T]\to\mathbb{R}$ verifying $W_{ij}(0)=W_{ij}(T)$ and such that
%\begin{equation}\label{eq:VW}
%  T\dot{V}_i(x(t_k+\tau))+\Lambda_{ij}(t_k)+\dfrac{d}{d\tau}W_{ij}(\tau)<0
%\end{equation}
%for all $\tau\in[0,T]$. Since $\dot{V}$ is a quadratic form in $\xi_k(\tau)$, then $W_{ij}$ can be chosen quadratic as well
%\begin{equation}
%  W_{ij}(\tau)=\xi_k(\tau)^{\transp}Z_{ij}(\tau)\xi_k(\tau)
%\end{equation}
%%
%where $Z_{ij}(\tau)$ is real symmetric and differentiable. Expanding then (\ref{eq:VW}) leads to the LMI condition (\ref{eq:expless}). Finally, remarking (\ref{eq:toyloilol}), then equality $W_{ij}(0)=W_{ij}(T)$ is satisfied if and only if (\ref{eq:loliloolldr}) holds. The proof is complete.
\end{proof}

\begin{remark}
It is worth noting that condition (\ref{eq:expless}) is infinite-dimensional since the decision variables $Z_{ij}(\tau)$'s are matrix functions. In order to render the problem tractable, these functions will be assumed as polynomials and determined using sum-of-squares programming \cite{Chesi:09, Parrilo:00}. This can be performed conveniently using the package SOSTOOLS \cite{sostools} together with the semidefinite programming solver SeDuMi \cite{Sturm:01a}.
\end{remark}
\begin{remark}
The above result provides a sufficient condition for Theorem \ref{th:geromel}. The necessity of the conditions of Theorem \ref{th:equiv}  is an open problem and examples tend to suggest that the conditions might be necessary. This question is left for future research.
\end{remark}

The following corollary concerns the case of constant inter-switching periods:
\begin{corollary}
  Assume there exist a scalar $\eps>0$, matrices $P_i\in\mathbb{S}_{+}^n$, $i=\gint{1}{N}$ and continuously differentiable symmetric matrix functions $Z_{ij}:[0,\bar{T}]\to\mathbb{S}^{3n}$, $i,j=\gint{1}{N}$, $i\ne j$, satisfying the constraints (\ref{eq:loliloolldr}) and such that the LMIs (\ref{eq:expless}) hold for all $\tau\in[0,\bar{T}]$. Then, the switched system (\ref{eq:mainsyst}) with constant switching period $\bar{T}$ is globally asymptotically stable.\mendth
\end{corollary}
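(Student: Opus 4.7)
The plan is to follow the proof of Theorem~\ref{th:equiv} essentially verbatim up to the derivation of the discrete-time inequality $x(0)^{\transp}[e^{A_i^{\transp}\bar{T}}P_ie^{A_i\bar{T}}-P_j]x(0)\le-\eps\|x(0)\|_2^2$ for all $i\ne j$, and then replace the appeal to Theorem~\ref{th:geromel} by a direct discrete-time Lyapunov argument tailored to the constant-period case. The first key observation is that the derivation of that discrete-time inequality in the proof of Theorem~\ref{th:equiv} never actually uses the continuous-time LMI $A_i^{\transp}P_i+P_iA_i\prec0$: that condition is only invoked afterwards to match the hypotheses of Theorem~\ref{th:geromel}, which is required there because the dwell-time is only constrained to be at least $\bar{T}$. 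Under the hypotheses of the present corollary, the same algebra---pre- and post-multiplying (\ref{eq:expless}) by $\xi(\tau)=\col(x(\tau),x(0),x(\bar{T}))$, integrating over $[0,\bar{T}]$, cancelling the $Z_{ij}$ boundary terms via the constraint (\ref{eq:loliloolldr}), and recognising the resulting expression as $\bar{T}[V_i(x(\bar{T}))-V_j(x(0))+\eps\|x(0)\|_2^2]$---goes through unchanged and yields exactly the same discrete-time inequality with the same $\eps$.

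Once that inequality is in hand, I would label the switching instants $\{t_k\}$ with $t_{k+1}-t_k=\bar{T}$, set $i_k:=\sigma(t_k)$ so that $x(t_{k+1})=e^{A_{i_k}\bar{T}}x(t_k)$, and introduce the sampled Lyapunov-like sequence $W_k:=x(t_k)^{\transp}P_{i_{k-1}}x(t_k)$, i.e.\ the previous-mode Lyapunov level evaluated at the current sample. Applying the discrete-time inequality with $i=i_k$, $j=i_{k-1}$, and $x(0)\leftarrow x(t_k)$ gives $W_{k+1}-W_k\le-\eps\|x(t_k)\|_2^2$, so $W_k$ is non-increasing and bounded below by $0$. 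A telescoping sum then yields $\sum_k\|x(t_k)\|_2^2<\infty$ and hence $x(t_k)\to 0$, while the bound $W_k\le W_1$ together with the uniform positive definiteness of the $P_i$'s delivers Lyapunov stability at the sample points.

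To extend convergence and stability from the samples to the whole continuous trajectory, I would note that on each interval $[t_k,t_{k+1})$ one has $x(t)=e^{A_{i_k}(t-t_k)}x(t_k)$ with $t-t_k\in[0,\bar{T}]$, so $\|x(t)\|\le M\|x(t_k)\|$ with $M:=\max_i\sup_{s\in[0,\bar{T}]}\|e^{A_is}\|<\infty$. This uniform sample-to-trajectory estimate propagates both stability and convergence from the samples to the full trajectory, establishing global asymptotic stability. The main obstacle is conceptual rather than computational: one must verify carefully that the continuous-time LMI required in Theorem~\ref{th:equiv} is truly unnecessary here, the underlying reason being that with a known, constant mode duration $\bar{T}$ the Lyapunov function is not required to be monotone within each mode---only the sampled sequence $W_k$ must decrease, and this is exactly what the discrete-time inequality delivers.
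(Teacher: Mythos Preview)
Your proposal is correct and is essentially the argument the paper has in mind. The paper gives no explicit proof for this corollary; it is presented as an immediate consequence of the computation in the proof of Theorem~\ref{th:equiv}, which---as you correctly observe---derives the discrete-time inequality $e^{A_i^{\transp}\bar T}P_ie^{A_i\bar T}-P_j\preceq-\eps I_n$ without ever invoking the continuous-time LMI $A_i^{\transp}P_i+P_iA_i\prec0$. From there, the paper would simply appeal to the discrete-time switched-system stability criterion underlying Theorem~\ref{th:second} (with $T_i^{min}=T_i^{max}=\bar T$), whereas you unroll that criterion explicitly via the sampled sequence $W_k=x(t_k)^{\transp}P_{i_{k-1}}x(t_k)$ and the uniform inter-sample bound $\|e^{A_is}\|\le M$ on $[0,\bar T]$. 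The two routes are the same in substance; your version has the minor advantage of making transparent why the intra-mode monotonicity condition can be dropped in the constant-period case.
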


 %A discussion on the use of sum-of-squares programming is given in Section

\subsection{Connection with looped-functionals}

The  result of Theorem \ref{th:equiv} can be interpreted as a monotonic non-increase condition of the looped-functionals %
\begin{equation}\label{eq:looped}
\begin{array}{lcl}
  W_{ij}(x)&=&x(\tau)^{\transp}P_ix(\tau)+\xi(\tau)^{\transp}Z_{ij}(\tau)\xi(\tau)\\
  &&+\tau x(0)^{\transp}(P_i-P_j+\eps I_n)x(0)
\end{array}
\end{equation}
where $\xi(\tau)=\col(x(\tau),x(0),x(\bar{T}))$, $\tau\in[0,\bar{T}]$ and the matrix functions $Z_{ij}$ satisfy the boundary condition (\ref{eq:loliloolldr}). The looped-functionals (\ref{eq:looped}) are indeed nonincreasing over $[0,\bar{T}]$ if and only if the conditions (\ref{eq:expless}) hold. The term `looped' comes from the presence of the boundary condition (\ref{eq:loliloolldr}) that `loops' both sides of the functional. Such functionals have been successfully applied to the analysis of sampled-data systems \cite{Seuret:12,Seuret:11} and impulsive systems \cite{Briat:11l,Briat:13a}.

\subsection{Examples}

Illustrative examples are given here. The conditions of Theorem \ref{th:equiv} are enforced using sum-of-squares programming \cite{Parrilo:00,Chesi:09} and the semidefinite programming solver SeDuMi \cite{Sturm:01a}. Thus, in the examples below, the matrix functions $Z_{ij}$'s will be searched over the ring of polynomials with fixed degree. Note that polynomials can approximate continuous functions over compact set as precisely as desired by virtue of the Weierstrass approximation theorem. It is thus expected to obtain more and more accurate results as the degree of the $Z_{ij}$'s increases. The rates of convergence of the computed upper bounds may however be very heterogeneous.

%\subsubsection{Example 1}
\begin{example}
  Let us consider the system (\ref{eq:mainsyst}) with matrices \cite{Geromel:06b}
  \begin{equation}\label{eq:geromelex}
  \begin{array}{lclclcl}
        A_1&=&\begin{bmatrix}
    0 & 1\\
    -10 & -1
    \end{bmatrix},& &A_2&=&\begin{bmatrix}
      0 & 1\\
      -0.1 & -0.5
    \end{bmatrix}.
  \end{array}
  \end{equation}
  Using the initial result on minimal-dwell-time in \cite{Morse:96},  the upper-bound 6.66  on the minimum dwell-time is found. For comparison, the average dwell-time condition of \cite{Hespanha:99} yields the value 16.5554 as the upper-bound on the average dwell-time. Using the minimum dwell-time result based on mixed continuous-time and discrete-time conditions, i.e. Theorem \ref{th:geromel}, the upper bound 2.7508 on the minimum dwell-time is obtained. This emphasizes the efficiency and importance of Theorem \ref{th:geromel} in dwell-time analysis of linear switched systems. Theorem \ref{th:equiv} yields the minimum dwell-time estimates summarized in Table \ref{tab:ex123}. We can see that the proposed method allows to compute quite closely the upper-bound on the minimum dwell-time obtained with Theorem \ref{th:geromel} as the degree of $Z_{ij}$ increases.
  \end{example}
  \begin{table}
  \centering
    \begin{tabular}{|c|c||c|c|c|c|}
    \hline
      & degree of $Z_{ij}$'s & System (\ref{eq:geromelex}) & System (\ref{eq:chesi1}) & System (\ref{eq:chesi3})\\
      \hline
      \hline
      \multirow{7}{*}{Theorem \ref{th:equiv}}&1 & 8.8537 & 0.7438 & 4.0432\\
      &2 & 3.6310 & 0.6222 & 1.9176\\
      &3 & 3.0362 & -- &     1.9168\\
      &4 & 2.9147 & -- &     1.9167\\
      &5 & 2.7739 & -- &     1.9137\\
      &6 & 2.7545 & -- &     1.9135\\
      \hline
      Theorem \ref{th:geromel} & -- & 2.7508 & 0.6222 & 1.9134\\
      \hline
    \end{tabular}
    \caption{Upper bounds on the minimum dwell-time of Systems (\ref{eq:geromelex}), (\ref{eq:chesi1}) and (\ref{eq:chesi3}) determined using Theorem \ref{th:equiv} for different degrees for the polynomial functions $Z_{ij}$.}\label{tab:ex123}
  \end{table}

%\subsubsection{Example 2}
\begin{example}
  Let us consider the system (\ref{eq:mainsyst}) with matrices \cite{Chesi:10}
  \begin{equation}\label{eq:chesi1}
  \begin{array}{lclclcl}
        A_1&=&\begin{bmatrix}
    0 & 1\\
    -2 & -1
    \end{bmatrix},& &A_2&=&\begin{bmatrix}
      0 & 1\\
      -9 & -1
    \end{bmatrix}.
  \end{array}
  \end{equation}
  Using Theorem \ref{th:geromel}, the upper bound value 0.6222 on the minimum dwell-time is found. Using then Theorem \ref{th:equiv}, we obtain the sequence of upper bounds of Table \ref{tab:ex123}. We can see that the upper-bound determined using Theorem \ref{th:geromel} can be retrieved by using Theorem \ref{th:equiv} with polynomials $Z_{ij}$ of degree 2.
  \end{example}
%
  %\begin{table}
%  \centering
%    \begin{tabular}{|c|c|}
%    \hline
%      degree($Z$) & $T^{min}$\\
%      \hline
%      \hline
%      1 & \\%0.8537\\
%      2 & \\%0.6329\\
%      3 & \\%0.6223\\
%      4 & \\%0.6222\\
%      \hline
%    \end{tabular}
%    \caption{Upper bounds on the minimum dwell-time of system (\ref{eq:chesi1}) determined using Theorem \ref{th:equiv} for different degrees for the polynomials $Z_{ij}$}\label{tab:ex2}
%  \end{table}
%\end{example}

%\subsubsection{Example 3}
\begin{example}
    Let us consider the system (\ref{eq:mainsyst}) with matrices \cite{Chesi:10}
  \begin{equation}\label{eq:chesi3}
  \begin{array}{lclclcl}
        A_1&=&\begin{bmatrix}
    -1 & -1 & 1\\
    -1 & -1 & 0\\
    -2 & 1 & -1
    \end{bmatrix},& &A_2&=&\begin{bmatrix}
      -1 & 0 & 6\\
      -2 & -1 & -5\\
      0 & 3 & -1
    \end{bmatrix}.
  \end{array}
  \end{equation}
  Using Theorem \ref{th:geromel}, the minimum dwell-time upper bound value 1.9134 is found. Using then Theorem \ref{th:equiv}, we obtain the sequence of upper bounds of Table \ref{tab:ex123}. We can see that by choosing polynomials $Z_{ij}$ of order 6, the result of Theorem \ref{th:geromel} is almost retrieved.
\end{example}

%  \subsubsection{Remarks on the examples}
%
%  It seems important to give some general comments on the example. First of all, the rate of convergence is very heterogeneous.
%
%

%
%We can see that the upper-bound determined using Theorem \ref{th:geromel} can be retrieved by using Theorem \ref{th:equiv} with polynomials $Z_{ij}$'s of degree 4.
%
%  The sequence of upper-bounds on the minimum dwell-time is given in Table \ref{tab:ex3}. Using Theorem \ref{th:geromel}, the value is found.
%
  %\begin{table}
%  \centering
%    \begin{tabular}{|c|c|}
%    \hline
%      degree($Z$) & $T^{min}$\\
%      \hline
%      \hline
%      1 & 4.3708\\
%      2 & 1.9247\\
%      3 & 1.9169\\
%      4 & 1.9167\\
%      \hline
%    \end{tabular}
%    \caption{Upper bounds on the minimum dwell-time of system (\ref{eq:chesi3}) determined using Theorem \ref{th:equiv} for different degrees for $Z$}\label{tab:ex3}
%  \end{table}
%\end{example}

\section{Affine characterization of mode-dependent dwell-times}\label{sec:modedep}

Affine sufficient conditions for Theorem \ref{th:second} are stated below:
\begin{theorem}\label{th:equivb2}
Assume there exist a scalar $\eps>0$, matrices $P_i\in\mathbb{S}^n_{+}$, $i=\gint{1}{N}$ and symmetric matrix functions $Z_{ij}:[0,T^{max}_i]\times[T^{min}_i,T^{max}_i]\to\mathbb{S}^{3n}$, $i,j=\gint{1}{N}$, $i\ne j$, differentiable with respect to the first variable and verifying
\begin{equation}%\label{eq:loliloolldr}
Y_2^{\transp}Z_{ij}(T_i,T_i)Y_2-Y_1^{\transp}Z_{ij}(0,T_i)Y_1=0
\end{equation}
for all $T_i\in[T^{min}_i,T^{max}_i]$ where
\begin{equation}
\begin{array}{lclclcl}
  Y_1&=&\begin{bmatrix}
    I_n & 0_n\\
    I_n & 0_n\\
    0_n & I_n
  \end{bmatrix},&\quad& Y_2&=&\begin{bmatrix}
    0_n & I_n\\
    I_n & 0_n\\
    0_n & I_n
  \end{bmatrix}
\end{array}
\end{equation}
such that the LMIs
\begin{equation}\label{eq:explessb}
\begin{array}{l}
  \Psi_{ij}(T_i)+\He\left(Z_{ij}(\tau,T_i)\mathcal{D}_n(A_i)\right)\\
 \qquad+\dfrac{\partial Z_{ij}}{\partial \tau}(\tau,T_i)\preceq0,\ i,j=\gint{1}{N},\ i\ne j
\end{array}
\end{equation}
hold for all $\tau\in[0,T_i]$, $T_i\in[T^{min}_i,T^{max}_i]$ where
\begin{equation}
  \Psi_{ij}(T_i):=\begin{bmatrix}
   T_i(A_i^{\transp}P_i+P_iA_i) & 0_n & 0_n\\
   0_n & P_i-P_j+\eps I_n & 0_n\\
   \star & \star & 0_n
 \end{bmatrix}.
\end{equation}
Then, the switched system (\ref{eq:mainsyst}) with mode-dependent dwell-times $T_i\in[T^{min}_i,T^{max}_i]$, $i=1,\ldots,N$ is globally asymptotically stable and the conditions of Theorem \ref{th:second} are satisfied with the same matrices $P_i$'s.\mendth
\end{theorem}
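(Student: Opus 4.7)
The plan is to mirror the proof of Theorem \ref{th:equiv} almost verbatim, with the minimum dwell-time $\bar{T}$ replaced by the mode-specific dwell-time $T_i$, and with $Z_{ij}$ now carrying $T_i$ as a frozen parameter along which I only differentiate/integrate with respect to the first argument $\tau$.

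First I would fix indices $i\ne j$ and a value $T_i\in[T_i^{min},T_i^{max}]$, and on the interval $[0,T_i]$ define $\xi(\tau):=\col(x(\tau),x(0),x(T_i))$. Pre- and post-multiplying (\ref{eq:explessb}) by $\xi(\tau)$, I would observe that along any trajectory of mode $i$, the vectors $x(0)$ and $x(T_i)$ are constants in $\tau$, so only the first block of $\xi$ contributes to $\dot\xi$, giving $\dot\xi(\tau)=\mathcal{D}_n(A_i)\xi(\tau)$. Consequently the combination $\He(Z_{ij}(\tau,T_i)\mathcal{D}_n(A_i))+\partial_\tau Z_{ij}(\tau,T_i)$, sandwiched by $\xi(\tau)$, realizes exactly $\frac{d}{d\tau}[\xi(\tau)^{\transp}Z_{ij}(\tau,T_i)\xi(\tau)]$, which is the analogue of inequality (\ref{eq:xit}).

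Next I would integrate this inequality from $0$ to $T_i$, as was done in (\ref{eq:eta}). The telescoped boundary term becomes $\xi(T_i)^{\transp}Z_{ij}(T_i,T_i)\xi(T_i)-\xi(0)^{\transp}Z_{ij}(0,T_i)\xi(0)$, and using $\xi(0)=Y_1\col(x(0),x(T_i))$ together with $\xi(T_i)=Y_2\col(x(0),x(T_i))$ and the boundary condition $Y_2^{\transp}Z_{ij}(T_i,T_i)Y_2-Y_1^{\transp}Z_{ij}(0,T_i)Y_1=0$, it vanishes identically. The remaining integral of $\xi^{\transp}\Psi_{ij}(T_i)\xi$ reduces, as in the previous proof, to
\begin{equation*}
T_i\bigl[V_i(x(T_i))-V_j(x(0))+\eps\|x(0)\|_2^2\bigr]\le 0.
\end{equation*}
Using $x(T_i)=e^{A_iT_i}x(0)$ and the arbitrariness of $x(0)\in\mathbb{R}^n$, this yields
\begin{equation*}
e^{A_i^{\transp}T_i}P_ie^{A_iT_i}-P_j\preceq-\eps I_n\prec 0,
\end{equation*}
which is precisely condition (\ref{eq:djsqod}) at that specific $T_i$. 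Since the above argument is pointwise in $T_i$, repeating it for every $T_i\in[T_i^{min},T_i^{max}]$ and every admissible pair $(i,j)$ verifies the full hypothesis of Theorem \ref{th:second}, and global asymptotic stability follows.

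The main obstacle is only conceptual: one has to keep $T_i$ frozen during the $\tau$-integration so that the derivative along trajectories reproduces \emph{only} the partial $\partial Z_{ij}/\partial\tau$ appearing in (\ref{eq:explessb}); the dependence of $\xi$ on $T_i$ through its third block is harmless because $x(T_i)$ is itself constant in $\tau$. Once this is acknowledged, the derivation is a parametrized copy of the proof of Theorem \ref{th:equiv}, and no new mechanism beyond the looped-functional boundary-cancellation identity is required.
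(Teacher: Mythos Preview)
Your proposal is correct and coincides with the paper's approach: the paper's proof of Theorem~\ref{th:equivb2} simply states that it ``follows the same lines as the one of Theorem~\ref{th:equiv},'' and you have accurately reproduced that argument with $\bar T$ replaced by the frozen parameter $T_i$ and the boundary condition applied at $(T_i,T_i)$ and $(0,T_i)$. No additional mechanism is needed.
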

\begin{proof}
  The proof follows the same lines as the one of Theorem \ref{th:equiv}.
\end{proof}

Let us illustrate the above result with an example:
\begin{example}
  Consider the switched system (\ref{eq:mainsyst}) with 2 modes and matrices
  \begin{equation}\label{eq:syst2jdl}
    A_1=\begin{bmatrix}
      -2 & 1\\
     5 & -3
    \end{bmatrix},\quad A_2=\begin{bmatrix}
    0.1 & 0\\
      0.1 & 0.2
    \end{bmatrix}.
  \end{equation}
  The first subsystem is asymptotically stable while the second one is anti-stable. Therefore, (mode-dependent) average dwell-time results such as the ones in \cite{Hespanha:99,Zhao:12} are clearly not applicable since they require that the subsystems be asymptotically stable. Now let $T_1\in[T^{min}_1,\infty)$ and let us determine the range of $T_2\in[T^{min}_2,T^{max}_2]$ such that the overall system is asymptotically stable. Since the first subsystem satisfies a minimum dwell-time condition, Remark \ref{rem:1} applies and the conditions (\ref{eq:rem1_1}) and (\ref{eq:rem1_2}) are considered for mode 1. The mode-dependent dwell-time of mode 2 belongs to a compact interval, hence it must be characterized using the affine conditions of Theorem \ref{th:equivb2}. Setting  $T^{min}_2=0.001$, we obtain the results of Table \ref{tab:range}. Note that in the first two cases, the computed maximal $T^{max}_2$ is equal to the one obtained in the periodic switching case (necessary condition). We can hence conclude on the nonconservatism of the approach for these specific cases. Note also that the obtained results are valid in both the cases of constant and uncertain dwell-times, and time-varying dwell-times.
\end{example}

\begin{table*}
  \centering
  \begin{tabular}{|c|c||c|c|c|c|}
  \hline
   & degree of $Z_{ij}$ & $T^{min}_1=1$ &  $T^{min}_1=2$ & $T^{min}_1=5$ & $T^{min}_1=7$\\
    \hline
   \multirow{2}{*}{Theorem \ref{th:equivb2}}  &1 & 1.2841 & 2.5388 & 5.9931 & 7.8897\\ %5.2015 \\
    &2 & 1.2847 & 2.5471 & 6.2149 & 8.5753\\ %11.5242\\
  %  3 & -- & -- &  & \\
    \hline
    Periodic switching case & -- & 1.2847 & 2.5471 & 6.2158 & 8.5804\\% 12.0335 \\
    \hline
  \end{tabular}
  \caption{Maximal $T_2^{max}$ for system (\ref{eq:syst2jdl}) computed for different values of $T^{min}_1$.}\label{tab:range}
\end{table*}

\section{Minimum dwell-times for uncertain switched systems}\label{sec:unc}

Unlike Theorem \ref{th:geromel}, Theorem \ref{th:equiv} can easily be extended to deal with uncertain systems thanks to the affine dependence of the conditions on the system matrices. Let us assume now that the matrices of the system (\ref{eq:mainsyst}) are uncertain and belong to the convex sets
\begin{equation}\label{eq:uncertainty}
\mathcal{A}_i:=\left\{F_i+U_i\Delta_iV_i:||\Delta_i||_2\le 1\right\},\ i=\gint{1}{N}
\end{equation}
where $F_i$, $U_i$ and $V_i$ are known matrices of appropriate dimensions. The uncertain matrices $\Delta_i$ are allowed to be time-varying. Theorem \ref{th:equiv} then extends naturally to the uncertain case as shown below:
\begin{theorem}\label{th:equiv2}
Assume there exist matrices $P_i\in\mathbb{S}^n_{+}$, $i=\gint{1}{N}$, real symmetric differentiable matrix functions $Z_{ij}:[0,\bar{T}]\to\mathbb{S}^{2n}$, $i,j=\gint{1}{N}$, $i\ne j$, scalar functions $\mu_{ij}:[0,\bar{T}]\to\mathbb{R}_{+}$, $i,j=\gint{1}{N}$, $i\ne j$ and constant scalars $\eps,\mu_i>0$, $i=\gint{1}{N}$, verifying
\begin{equation}\label{eq:loliloolldr2}
Y_2^{\transp}Z_{ij}(\bar{T})Y_2-Y_1^{\transp}Z_{ij}(0)Y_1=0
\end{equation}
where
\begin{equation}
\begin{array}{lclclcl}
  Y_1&=&\begin{bmatrix}
    I_n & 0_n\\
    I_n & 0_n\\
    0_n & I_n
  \end{bmatrix},&\quad& Y_2&=&\begin{bmatrix}
    0_n & I_n\\
    I_n & 0_n\\
    0_n & I_n
  \end{bmatrix}
\end{array}
\end{equation}
and such that the LMIs
\begin{equation}
  \begin{bmatrix}
    F_i^{\transp}P_i+P_iF_i+\mu_i V_i^{\transp}V_i & P_iU_i\\
    \star & -\mu_i I_n
  \end{bmatrix}\prec0
\end{equation}
\begin{equation}\label{eq:expless2}
\begin{bmatrix}
   \Xi_{ij}^1&\vline & \begin{bmatrix}
   Z_{ij}^{11}(\tau)+TP_i\\
   Z_{ij}^{21}(\tau)\\
   Z_{ij}^{31}(\tau)
 \end{bmatrix}U_i\\
 \hline
 \star & \vline & -\mu_{ij}(\tau) I_n
\end{bmatrix}\preceq0
\end{equation}
hold for all $i,j=\gint{1}{N},\ i\ne j$ and all $\tau\in[0,\bar{T}]$ where
\begin{equation}
\begin{array}{lcl}
    \Xi_{ij}^1&=&\Psi_{ij}(\bar{T})+\He\left(Z_{ij}(\tau)\mathcal{D}_n(F_i)\right)\\
    &&+\mathcal{D}_n(\mu_{ij}(\tau)V_i^{\transp}V_i)+\dot{Z}_{ij}(\tau)
\end{array}
\end{equation}
and
\begin{equation}
  \Psi_{ij}(\bar{T}):=\begin{bmatrix}
   \bar{T}\left(F_i^{\transp}P_i+P_iF_i\right) & 0_n & 0_n\\
   0_n & P_i-P_j+\eps I_n & 0_n\\
   0_n & 0_n & 0_n
 \end{bmatrix}
\end{equation}
and $Z_{ij}^{k\ell}$ is the $(k,\ell)$ block of dimension $n$ of matrix $Z_{ij}$. Then, the switched system (\ref{eq:mainsyst})-(\ref{eq:uncertainty}) is globally asymptotically stable for any sequence of switching instants in $\mathbb{I}_{\bar{T}}$ and Theorem \ref{th:geromel} is satisfied for all $A_i\in\mathcal{A}_i$, $i=\gint{1}{N}$.
\end{theorem}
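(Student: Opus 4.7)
The plan is to mirror the proof of Theorem \ref{th:equiv} and absorb the uncertainty blocks by Petersen's lemma (equivalently, the matrix S-procedure for norm-bounded perturbations). Substituting $A_i=F_i+U_i\Delta_iV_i$ into condition \eqref{eq:expless} splits it into a nominal part, identical to the nominal expression in $F_i$, and an uncertain part collecting all $\Delta_i$-dependent terms. Concretely, the $(1,1)$ block of $\Psi_{ij}(\bar{T})$ contributes an extra $\bar{T}\,\He(P_iU_i\Delta_iV_i)$, and $\He\!\left(Z_{ij}(\tau)\mathcal{D}_n(F_i+U_i\Delta_iV_i)\right)$ contributes an extra $\He\!\left(Z_{ij}(\tau)\mathcal{D}_n(U_i\Delta_iV_i)\right)$. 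Because $\mathcal{D}_n(U_i\Delta_iV_i)$ is zero outside the $(1,1)$-block, only the first block column of $Z_{ij}(\tau)$ interacts with $\Delta_i$, and all uncertain contributions aggregate into
\begin{equation*}
\He\!\Bigl(\mathcal{U}_{ij}(\tau)\,\Delta_i\,\mathcal{V}_i\Bigr),\quad \mathcal{U}_{ij}(\tau)=\begin{bmatrix}Z_{ij}^{11}(\tau)+\bar{T}P_i\\ Z_{ij}^{21}(\tau)\\ Z_{ij}^{31}(\tau)\end{bmatrix}U_i,\ \ \mathcal{V}_i=V_i\begin{bmatrix}I_n & 0_n & 0_n\end{bmatrix}.
\end{equation*}

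Next I would apply Petersen's lemma: for any $\Delta_i$ with $\|\Delta_i\|_2\le 1$ and any scalar $\mu_{ij}(\tau)>0$,
\begin{equation*}
\He\!\left(\mathcal{U}_{ij}(\tau)\Delta_i\mathcal{V}_i\right)\preceq \mu_{ij}(\tau)^{-1}\mathcal{U}_{ij}(\tau)\mathcal{U}_{ij}(\tau)^{\transp}+\mu_{ij}(\tau)\,\mathcal{V}_i^{\transp}\mathcal{V}_i.
\end{equation*}
The second term is precisely $\mathcal{D}_n\!\left(\mu_{ij}(\tau)V_i^{\transp}V_i\right)$, which is the extra term visible in $\Xi_{ij}^1$. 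The first term is a rank-$n$ positive semidefinite perturbation that is handled by a Schur complement, producing exactly the bordered LMI \eqref{eq:expless2}: the off-diagonal column is $\mathcal{U}_{ij}(\tau)$ and the $(2,2)$ block is $-\mu_{ij}(\tau)I_n$. The continuous-time LMI $A_i^{\transp}P_i+P_iA_i\prec 0$ is treated in exactly the same way on the smaller scale: write it as $F_i^{\transp}P_i+P_iF_i+\He(P_iU_i\Delta_iV_i)\prec0$, bound the uncertain term with a constant multiplier $\mu_i>0$, and Schur-complement to recover the stated $2n\times2n$ LMI.

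Once these two bounding steps are in place, the remainder of the argument is a pointwise invocation of the nominal Theorem \ref{th:equiv}: for every admissible realisation $A_i\in\mathcal{A}_i$, the uncertain \eqref{eq:expless2} implies the corresponding nominal \eqref{eq:expless} with the same $P_i$ and $Z_{ij}$, hence also implies the conditions of Theorem \ref{th:geromel}. The boundary constraint \eqref{eq:loliloolldr2} is uncertainty-free, so it carries over unchanged. Global asymptotic stability for any switching sequence in $\mathbb{I}_{\bar T}$ then follows verbatim from Theorem \ref{th:geromel}, even when $\Delta_i$ is time-varying, because Petersen's bound is uniform in $\Delta_i$ over the unit ball.

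The main obstacle, and really the only non-routine part, is the bookkeeping around the $(1,1)$ block: one must recognise that the $\bar{T}P_i$ contribution coming from $\Psi_{ij}$ and the $Z_{ij}^{11}(\tau)$ contribution coming from $\He(Z_{ij}\mathcal{D}_n(\cdot))$ multiply the \emph{same} uncertainty channel $\Delta_i$, so they must be merged into a single outer factor before Petersen's lemma is applied; applying the lemma separately to each would be conservative and would not reproduce the bordered LMI \eqref{eq:expless2}. Everything else is algebraic verification and a routine Schur complement.
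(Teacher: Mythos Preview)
Your proposal is correct and follows exactly the approach sketched in the paper: substitute $A_i=F_i+U_i\Delta_iV_i$ into the LMIs of Theorem~\ref{th:equiv}, eliminate the uncertainty via Petersen's lemma with multipliers $\mu_i$ and $\mu_{ij}(\tau)$, and apply a Schur complement to obtain the bordered LMIs. Your explicit identification of the combined outer factor $\mathcal{U}_{ij}(\tau)$ (merging the $\bar{T}P_i$ and $Z_{ij}^{11}(\tau)$ contributions before invoking Petersen's lemma) is precisely the bookkeeping the paper leaves implicit, and your remark that applying the lemma separately would be conservative is a useful clarification.
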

\begin{proof}
The proof is very standard for dealing with this type of uncertainties and is thus only sketched. Substitute first the uncertain system matrices into the LMIs of Theorem \ref{th:equiv}. Then, by applying Petersen's Lemma \cite{Petersen:87a} (or equivalently the Scaled Bounded-Real Lemma with full-block uncertainty, see e.g. \cite{Packard:93,Apkarian:95a}) the uncertain matrices $\Delta_i$ can be eliminated from the LMIs and new conditions involving the scalings $\mu_i$ and $\mu_{ij}(\tau)$ are obtained. A Schur complement on the resulting conditions finally yields those stated in the theorem.
\end{proof}

\begin{example}
We revisit here system (\ref{eq:chesi1}) where the system matrices now belong to the convex sets
\begin{equation}\label{eq:uncds}
   \mathcal{A}_i=\left\{F_i+\kappa\delta_iU_iV_i, |\delta_i|\le 1\right\}
\end{equation}
where the $F_i$'s are equal to the $A_i$'s defined in (\ref{eq:chesi1}), $U_i=\begin{bmatrix}
  1 & 0
\end{bmatrix}^{\transp}$, $V_i=\begin{bmatrix}
  1 & 0
\end{bmatrix}$ and $\delta_i\in[-1,1]$, $i=1,2$. The additional parameter $\kappa>0$ is the maximal amplitude of the perturbation. Using a gridding approach, the LMI conditions of Theorem \ref{th:geromel} indicate that the maximal $\kappa$ for which the LMIs are still feasible is $\kappa_{max}=1.3229$. Computed upper-bounds on the minimum dwell-time according to different values for $\kappa>0$ and different degrees for $Z_{ij}$ are given in Table \ref{tab:ex4}. It is interesting to note that the accuracy of the approach reduces when the perturbation magnitude $\kappa$ increases. This can be understood by the fact that the looped-functional does not depend on the uncertain parameter and it is more and more difficult to find a \emph{common looped-functional} as the maximal amplitude of the uncertainty increases. This problem may be solved by making the functional depending on the parameters as this is usually done in robust/LPV analysis. Note also that gridding the conditions of Theorem \ref{th:geromel} is very imprecise (only checks a finite number of points) and has high computational complexity, while the proposed approach allows to consider all the possible matrices in the uncertainty set. An advantage of Theorem \ref{th:equiv2} over Theorem \ref{th:geromel}, is that the results are also valid in the case of time-varying parameters/matrices, therefore the results of Table \ref{tab:ex4} are valid for arbitrarily time-varying parameters $\delta_i(t)\in[-1,1]$.
  \begin{table*}
  \centering
    \begin{tabular}{|c|c||c|c|c|c|c|c|c|}
    \hline
      & degree of $Z_{ij}$ & $\kappa=0.1$ & $\kappa=0.3$ & $\kappa=0.5$ & $\kappa=0.7$ & $\kappa=0.9$ & $\kappa=1.1$ & $\kappa=1.3$\\
      \hline
      \hline
       % & 0.1    & 0.3    & 0.5    & 0.7    & 0.9    & 1.1    & 1.3
      \multirow{4}{*}{Theorem \ref{th:equiv2}}&1 & 0.8359 & 1.0379 & 1.2691 & 1.5756 & 2.0605 & 2.9498 & 5.6306\\
      &2 & 0.6807 & 0.7941 & 0.9288 & 1.1412 & 1.4614 & 1.9617 & 2.8692\\
      &3 & 0.6788 & 0.7425 & 0.8008 & 0.8844 & 1.4521 & 1.3146 & 2.1973\\
      &4 & 0.6785 & 0.7418 & 0.7988 & 0.8803 & 1.0113 & 1.2038 & 1.8835\\
      &5 & 0.6785 & 0.7413 & 0.7976 & 0.8786 & 1.0004 & 1.1834 & 1.7174\\
      \hline
      Theorem \ref{th:geromel}& -- & 0.6759 &  0.7298 & 0.7689 & 0.8128 &  0.8673 & 0.9512 & 1.1475\\
      \hline
    \end{tabular}
      \caption{Upper bounds on the minimum dwell-time of system (\ref{eq:chesi1})-(\ref{eq:uncds}) determined using Theorem \ref{th:equiv2} for different degrees for $Z$}\label{tab:ex4}
  \end{table*}
\end{example}

\section{Conclusion}

New conditions for minimal and mode-dependent dwell-times characterization for linear switched systems have been presented. The affine structure of the conditions has allowed to extend the results to the uncertain case. Several examples illustrate the approach. Future works will be devoted to the necessity analysis of the obtained conditions.

\bibliographystyle{IEEEtran}

% Generated by IEEEtran.bst, version: 1.13 (2008/09/30)

\end{document}